\documentclass[12pt]{amsart}

\usepackage{amsmath}
\usepackage{amssymb}
\usepackage{ascmac}
\usepackage{amsthm}

\makeatletter

\@addtoreset{equation}{section}
\makeatother 

\theoremstyle{plain}
\newtheorem{thm}{Theorem}[section]
\newtheorem*{thm*}{Theorem}

\newtheorem{lem}[thm]{Lemma}
\newtheorem{cor}[thm]{Corollary}
\newtheorem{claim}[thm]{Claim}

\theoremstyle{remark}

\newcommand{\vol}{\operatorname{vol}}

\newcommand{\inrad}{\operatorname{inrad}}

\usepackage{latexsym}

\title[Some universal inequalities]{Some universal inequalities for Dirichlet eigenvalues of the Laplacian on a Euclidean convex domain}

\author{Kei Funano}
\address{Division of Mathematics \& Research Center for Pure and Applied Mathematics, Graduate School of Information Sciences, Tohoku University, 6-3-09 Aramaki-Aza-Aoba, Aoba-ku, Sendai 980-8579, Japan}
\email{kfunano@tohoku.ac.jp}

\subjclass[2010]{35P15, 53C23, 58J50}
\keywords{Eigenvalues of the Laplacian; Dirichlet boundary condition; Universal inequality; Domain monotonicity; Convex domain}
\date{\today}

\begin{document}
\maketitle

\begin{abstract}
We establish two universal inequalities for Dirichlet eigenvalues of the Laplacian on a Euclidean convex domain.
\end{abstract}






\section{Introduction}
Let $\Omega$ be a bounded domain in $\mathbb{R}^n$. 
We consider the Dirichlet eigenvalue problem for the Laplacian:
\begin{align*}
  \begin{cases}
    -\Delta u = \lambda u & \text{in } \Omega, \\
    u = 0 & \text{on } \partial\Omega.
  \end{cases}
\end{align*}
It is well known that the spectrum consists of an infinite sequence of eigenvalues $0 < \lambda_1(\Omega) < \lambda_2(\Omega) \leq \lambda_3(\Omega) \leq \dots \to \infty$. The asymptotic behavior of eigenvalues is described by the classical Weyl's law (\cite{W}, refer to \cite[Section 3.3]{LMP}):
\begin{align*}
    \lambda_k(\Omega)\sim 4\pi^2 \left(\frac{k}{\omega_n \vol(\Omega)}\right)^{\frac{2}{n}}
  \quad \text{as } k \to \infty,
\end{align*}where $\omega_n$ denotes the volume of the unit ball in $\mathbb{R}^n$. It is therefore a natural question whether the ratio $\lambda_k(\Omega)/\lambda_l(\Omega)$ 
can be bounded above and below by explicit multiples of $(k/l)^{2/n}$ with constants.

Such inequalities are referred to as \emph{universal inequalities}, 
as they hold for all domains in a given class without dependence on the particular geometry of $\Omega$. The study of universal inequalities for Dirichlet eigenvalues was pioneered by 
Payne, P\'{o}lya, and Weinberger \cite{PPW}. After that many mathematicians contributed and now the study of universal inequalities has a rich history (refer to \cite{A} and \cite[Section 5.4]{LMP}). A notable feature of these Dirichlet universal inequalities is that they hold 
for \emph{arbitrary} bounded domains, without any geometric restrictions. However, many of these universal inequalities focus on the ratio of consecutive eigenvalues or the sum of the first $k$ eigenvalues. It is more challenging to establish "ratios of arbitrary eigenvalues" that hold uniformly for a specific class of domains.

In this paper, we consider the Dirichlet eigenvalue problem on bounded convex domains 
in $\mathbb{R}^n$ and establish both upper and lower bounds 
for $\lambda_k(\Omega)/\lambda_l(\Omega)$ in terms of $(k/l)^{2/n}$.
Our main results are the following two theorems.
\begin{thm}\label{MTHM}There exists a constant $c_n>0$ depending only on $n$ which satisfies the following. Let $\Omega$ be a bounded convex domain in $\mathbb{R}^n$. Then for any $k\geq l$ we have
    \begin{align*}
      \lambda_k(\Omega)\leq c_n\Big(\frac{k}{l}\Big)^{\frac{2}{n}}\lambda_l(\Omega).
    \end{align*}The constant $c_n$ can be chosen as $c_n=12 n^3(j_{\frac{n}{2}-1,1})^2$, where $j_{\nu,1}$ is the first positive zero of the Bessel function $J_{\nu}$ of the first kind of order $\nu$.
\end{thm}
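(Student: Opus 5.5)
Here is the plan: bound $\lambda_k$ from above by packing many small balls into $\Omega$, bound $\lambda_l$ from below using the inradius and a volume argument, and compare the two.

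\emph{Upper bound for $\lambda_k$.} If $\Omega$ contains $k$ pairwise disjoint balls of radius $r$, then domain monotonicity with disjoint test functions gives
\[
\lambda_k(\Omega)\le \lambda_1(B_r)=j_{\frac n2-1,1}^2/r^2 .
\]
To find such balls I normalize $\Omega$ by John's ellipsoid: after an affine map, $E\subset\Omega\subset nE$ for an ellipsoid $E$ with semi-axes $a_1\le\dots\le a_n$. I prefer to work directly with a box. Every convex body contains a box $Q=\prod[0,b_i]$, with $b_1\le\dots\le b_n$, such that a dilate $c n Q$ (suitably centered) contains $\Omega$. Take $r$ so that the grid of cubes of side $2r$ inside $Q$ has at least $k$ cells. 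The count is $\prod_i \lfloor b_i/2r\rfloor$. Directions with $b_i<2r$ contribute nothing, so $r$ must be chosen relative to the thin directions. This gives
\[
\lambda_k(\Omega)\lesssim j^2\, \min_m \max\Big( b_1^{-2},\ \big(k/(b_1\cdots b_m)\big)^{2/m}\ \text{-type terms}\Big),
\]
arranged so that the first $m$ sides are "large".

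\emph{Lower bound for $\lambda_l$.} I use two facts:
\begin{itemize}
\item Hersch--Protter gives $\lambda_1(\Omega)\ge \pi^2/(4\inrad^2)\gtrsim 1/(n^2b_1^2)$.
\item Li--Yau/Berezin gives $\lambda_l(\Omega)\ge c\,(l/\vol\Omega)^{2/n}$.
\end{itemize}
A pure volume bound will not match the box upper bound when $\Omega$ is thin in some directions. So I need an intermediate estimate: $\lambda_l\gtrsim (l/(b_{n-m+1}\cdots b_n))^{2/m}$ whenever that exceeds $b_{n-m}^{-2}$.

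I would prove this estimate by domain monotonicity with a product box $\Omega\subset P=\prod[0,Cnb_i]$, using $\lambda_l(\Omega)\ge\lambda_l(P)$. The spectrum of $P$ is explicit: $\pi^2\sum_i m_i^2/(Cnb_i)^2$. Counting lattice points of an ellipsoid, with only the directions whose $b_i$ exceeds the relevant scale contributing more than one point, gives exactly this multi-scale lower bound.

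\emph{Comparison.} Both $\lambda_k(\Omega)$ and $\lambda_l(\Omega)$ are now comparable, up to factors polynomial in $n$ (accounting for the $n^3$), to the same function
\[
F(t)=\text{the } t\text{-th eigenvalue of the box } \textstyle\prod[0,b_i].
\]
I need $F(k)\le C(k/l)^{2/n}F(l)$. This follows from the lattice count $N(\lambda)=\#\{m\in\mathbb{Z}_{>0}^n:\sum m_i^2/b_i^2\le\lambda\}$ satisfying $N(s\lambda)\le s^{n/2}\cdot C\,N(\lambda)$... but that inequality goes the wrong way. What I actually need is $N(s\lambda)\ge c\,s^{n/2}N(\lambda)$ for $s\ge1$. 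This holds because scaling an ellipsoid by $\sqrt s$ multiplies its positive-lattice-point count by at least roughly $\prod_i \lfloor\sqrt s\, x_i\rfloor/\lfloor x_i\rfloor$, up to constants, which is at least about $s^{n/2}/2^n$. Inverting this gives $F(k)\le C_n (k/l)^{2/n} F(l)$.

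\emph{Main obstacle.} The hardest part is keeping the constants tight enough to get $12n^3 j^2$ while passing from $\Omega$ to inner and outer boxes. The aspect-ratio loss from the John-type box is a factor $n$ per side, which becomes $n^2$ in eigenvalues. The remaining factor $n$ has to come from the lattice counting step, so that step is where I would concentrate the care.
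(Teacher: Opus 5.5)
Your argument is correct in substance. It handles the box case differently from the paper, and parts of your sketch are either unnecessary or misdiagnose where the powers of $n$ come from.

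\textbf{The reduction.} This step is the paper's. Hatcher's lemma combines John's $E\subseteq\Omega\subseteq nE$ with $\prod_i[-a_i/\sqrt n,a_i/\sqrt n]\subseteq E$. It gives a box $Q$ with $Q\subseteq\Omega\subseteq n^{3/2}Q$ up to translation. Domain monotonicity plus scaling then give $n^{-3}\lambda_t(Q)\le\lambda_t(\Omega)\le\lambda_t(Q)$ for all $t$ at once. Consequences:
\begin{itemize}
\item The ball packing inside $\Omega$ and the multi-scale ``intermediate'' lower bound are not needed.
\item The entire factor $n^3$ comes from this sandwich. It is not $n^2$ from the box plus $n$ from lattice counting.
\item Your claim that some box satisfies $\Omega\subseteq cnQ$ is not what John's theorem gives for non-symmetric $\Omega$ (only for centrally symmetric ones), so drop it.
\end{itemize}

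\textbf{The box inequality.} The paper argues geometrically. With $r=\frac{2}{\sqrt3}(l/k)^{1/n}\lambda_l(R)^{-1/2}$, it takes a maximal $r$-separated set in $R$. Berezin--Li--Yau shows the set has at least $k$ points, and Hersch--Protter shows the shortest half-side is at least $r/2$. It then bounds $\lambda_k(R)$ by $\max_i\lambda_1(B(x_i,r/2)\cap R)$ via an inradius bound.

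You use the explicit spectrum instead. Let $N$ be the eigenvalue counting function of $Q=\prod[0,b_i]$. Your inequality $N(s\lambda)\ge c\,s^{n/2}N(\lambda)$ is true, but the heuristic with $\prod_i\lfloor\sqrt s\,x_i\rfloor/\lfloor x_i\rfloor$ should be replaced by the following argument.
\begin{itemize}
\item The region $\{x\in[0,\infty)^n:\pi^2\sum_i x_i^2/b_i^2\le\lambda\}$ is coordinatewise down-closed. So each positive lattice point $m$ in it carries the cell $\prod_i(m_i-1,m_i]$ inside it.
\item Dilating by $\sqrt s\ge1$ yields disjoint cells inside the region for $s\lambda$. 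Each contains at least $\lfloor\sqrt s\rfloor^n\ge(\sqrt s/2)^n$ positive lattice points.
\item Hence $N(s\lambda)\ge 2^{-n}s^{n/2}N(\lambda)$.
\item Taking $s=4(k/l)^{2/n}$ and $\lambda=\lambda_l(Q)$ gives $N(s\lambda)\ge k$, i.e.\ $\lambda_k(Q)\le 4(k/l)^{2/n}\lambda_l(Q)$.
\end{itemize}
So $c_n=4n^3$ works, and $4n^3\le 12n^3j_{\frac n2-1,1}^2$ since $j_{\nu,1}\ge\pi/2$.

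\textbf{What each route buys.} Yours is more elementary: it needs neither Berezin--Li--Yau, Hersch--Protter nor Bessel zeros, and its box constant is dimension-free. It also avoids the paper's inradius step, which is delicate. For $x_i$ at a vertex of $R$, the set $B(x_i,r/2)\cap R$ has inradius only $r/(2(1+\sqrt n))<r/4$ when $n\ge2$. The paper's net-and-disjoint-pieces mechanism, on the other hand, is what it reuses for the reverse inequality.
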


\begin{thm}\label{THM2}There exist two constants $\alpha_{n}$ and $\beta_n$ depending only on $n$ which satisfy the following. Let $\Omega$ be a bounded convex domain in $\mathbb{R}^n$ and $k\geq l$. If $\lambda_l(\Omega)>\alpha_n \lambda_1(\Omega)$ then we have  
    \begin{align*}
      \beta_n \Big(\frac{k}{l}\Big)^{\frac{2}{n}} \lambda_l(\Omega)\leq \lambda_k(\Omega).
    \end{align*}The constants $\alpha_n,\beta_n$ can be taken as 
    \begin{align*}
        \alpha_n=\frac{4^2}{\pi^2}(j_{\frac{n}{2}-1,1})^2n^3 \text{ and }\beta_n=\frac{1}{12n^3(j_{\frac{n}{2}-1,1})^2}.
    \end{align*}
\end{thm}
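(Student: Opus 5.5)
The plan is to sandwich both eigenvalues between Weyl-type quantities. For $\lambda_k(\Omega)$ I will use a lower bound that holds for every domain, and for $\lambda_l(\Omega)$ an upper bound that holds only when $l$ is past the regime where the thinnest direction of $\Omega$ dominates. The hypothesis $\lambda_l(\Omega)>\alpha_n\lambda_1(\Omega)$ is what places $l$ in that regime. Dividing the two bounds then gives the claim, with $\beta_n$ the ratio of the constants.

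\emph{Step 1 (lower bound for $\lambda_k$).} By the Li--Yau (Berezin) inequality, valid for any bounded domain,
\begin{align*}
\lambda_k(\Omega)\geq \frac{n}{n+2}\,4\pi^2\Big(\frac{k}{\omega_n\vol(\Omega)}\Big)^{\frac{2}{n}} .
\end{align*}

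\emph{Step 2 (geometry of $\Omega$).} Choose a John-type normalization: after a rigid motion, $\Omega$ contains a box (or ellipsoid) $Q=\prod_i[0,a_i]$ with $a_1\le\dots\le a_n$, and $\Omega$ is contained in a dilate of $Q$ by a factor depending only on $n$ (of order $n$ or $n^{3/2}$). Consequently $\vol(\Omega)\le C_n\prod_i a_i$, and the inradius satisfies $\inrad(\Omega)\le C_n a_1$. Hersch's inequality for convex domains, $\lambda_1(\Omega)\ge \pi^2/(4\,\inrad(\Omega)^2)$, then gives $\lambda_1(\Omega)\ge c_n'/a_1^2$. The factor $4^2/\pi^2$ in $\alpha_n$ suggests exactly this combination.

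\emph{Step 3 (upper bound for $\lambda_l$).} Put $\lambda=\lambda_l(\Omega)$. The hypothesis together with Step 2 gives $a_1^2\lambda\ge \alpha_n c_n'$, which is large. By domain monotonicity, $N_\Omega(\mu)\ge N_Q(\mu)$ for every $\mu$, where $N$ is the Dirichlet counting function. For the box,
\begin{align*}
N_Q(\mu)\ge\prod_{i=1}^n\Big\lfloor \frac{a_i\sqrt{\mu}}{\pi}\Big\rfloor .
\end{align*}
Since $a_i\sqrt{\mu}\ge a_1\sqrt{\mu}\ge 2\pi$ for $\mu$ just below $\lambda$, each floor is at least half its argument. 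Hence $N_\Omega(\mu)\ge (2\pi)^{-n}\prod_i a_i\,\mu^{n/2}\ge c_n''\vol(\Omega)\mu^{n/2}$. Because $N_\Omega(\mu)\le l-1$ for $\mu<\lambda_l$, letting $\mu\uparrow\lambda_l$ yields
\begin{align*}
\lambda_l(\Omega)\le C_n\Big(\frac{l}{\vol(\Omega)}\Big)^{\frac{2}{n}} .
\end{align*}
If the paper prefers balls, where the Bessel zero $j_{\frac n2-1,1}$ enters, the box can be replaced by a lattice of disjoint balls of radius $\sim a_1$ packed in $Q$. The counting argument is the same, with $\lambda_1$ of the ball equal to $j_{\frac n2-1,1}^2/r^2$.

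\emph{Step 4 (conclusion).} Dividing Step 1 by Step 3 gives
\begin{align*}
\lambda_k(\Omega)\ge \beta_n\Big(\frac{k}{l}\Big)^{\frac{2}{n}}\lambda_l(\Omega).
\end{align*}

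The main obstacle is Step 3. It requires controlling the dimensional constants in the John-type inclusion and making sure the threshold $\alpha_n$ really forces every direction of $Q$ to contribute at least one full wavelength. This is where the $n^3$ and $j_{\frac n2-1,1}^2$ factors accumulate.
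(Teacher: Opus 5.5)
Step~3 rests on a false lattice-point bound. It is easy to repair, and once repaired your route goes through.

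\textbf{The false step and its fix.} The Dirichlet eigenvalues of $Q=\prod_i[0,a_i]$ are $\pi^2\sum_i m_i^2/a_i^2$ with $m_i\geq 1$. Requiring $m_i\leq a_i\sqrt{\mu}/\pi$ for each $i$ only gives $\pi^2\sum_i m_i^2/a_i^2\leq n\mu$. So $N_Q(\mu)\geq\prod_i\lfloor a_i\sqrt{\mu}/\pi\rfloor$ fails for $n\geq 2$: for $Q=[0,\pi]^2$ and $\mu=1$ the left side is $0$ and the right side is $1$. Inscribing the cube in the ellipsoid gives the correct bound
\begin{align*}
N_Q(\mu)\geq\prod_{i=1}^n\Big\lfloor\frac{a_i\sqrt{\mu}}{\pi\sqrt{n}}\Big\rfloor .
\end{align*}
The threshold therefore becomes $a_1\sqrt{\mu}\geq\pi\sqrt{n}$, and the Weyl constant loses a factor $n$.

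\textbf{Checking the stated constants.} The statement asserts specific $\alpha_n,\beta_n$, so you should do this bookkeeping rather than leave the constants vague. Take Hatcher's boxes $Q=\prod_i[-a_i/\sqrt{n},a_i/\sqrt{n}]\subseteq\Omega\subseteq\prod_i[-na_i,na_i]$ with $a_1=\min_i a_i$.
\begin{itemize}
\item Hersch--Protter gives $\lambda_1(\Omega)\geq\pi^2/(4n^2a_1^2)$.
\item With the stated $\alpha_n$, the hypothesis then yields $s_1\sqrt{\lambda_l(\Omega)}>4j_{\frac{n}{2}-1,1}$ for the shortest side $s_1=2a_1/\sqrt{n}$ of $Q$.
\item Since $j_{\frac{n}{2}-1,1}^2>\frac{n}{2}(\frac{n}{2}+4)$, this exceeds $\pi\sqrt{n}$, so the corrected threshold is met.
\item Using $\vol\Omega\leq n^{3n/2}\vol Q$, you get $\lambda_l(\Omega)\leq 4\pi^2n^4(l/\vol\Omega)^{2/n}$.
\end{itemize}
Dividing this into Berezin--Li--Yau gives
\begin{align*}
\lambda_k(\Omega)\geq\frac{1}{(n+2)n^3\omega_n^{2/n}}\Big(\frac{k}{l}\Big)^{\frac{2}{n}}\lambda_l(\Omega).
\end{align*}
Since $\omega_n\leq 2^n$ and $3j_{\frac{n}{2}-1,1}^2\geq n+2$, this constant is at least $\frac{1}{4(n+2)n^3}\geq\beta_n$. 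Your aside about packing balls of radius $\sim a_1$ is at the wrong scale. The balls must sit at the wavelength scale; $a_1$ only has to dominate that scale.

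\textbf{Comparison with the paper.} The architecture is the same as the paper's. Berezin--Li--Yau bounds $\lambda_k$ from below through the volume, a packing-type bound controls $\lambda_l$ from above, and the hypothesis plus Hersch--Protter rules out a too-thin box. The mechanism for the upper bound is different.

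The paper first proves the orthotope case (Lemma~\ref{Mlem2}):
\begin{itemize}
\item It fixes $r=\frac{2}{\sqrt{3}}(k/l)^{1/n}\lambda_k(R)^{-1/2}$ and uses Berezin--Li--Yau to show that a maximal $r$-separated set in $R$ has at least $l$ points.
\item It proves $a_1>r/2$ by contradiction: packing at scale $a_1$ would force $\lambda_l(R)\leq c_n\lambda_1(R)$.
\item The disjoint truncated balls $B(x_i,r/2)\cap R$, each of inradius at least $r/4$, then give $\lambda_l(R)\leq 16j_{\frac{n}{2}-1,1}^2/r^2$.
\item Hatcher's lemma and monotonicity transfer the result to $\Omega$ at a cost of $n^3$.
\end{itemize}

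You replace the net and the bound $\lambda_1\leq j_{\frac{n}{2}-1,1}^2/\inrad^2$ with an explicit lattice-point count for the inscribed box, and you work on $\Omega$ directly. Your route is more elementary. It isolates a standalone Weyl-type bound $\lambda_l(\Omega)\leq C_n(l/\vol\Omega)^{2/n}$ under the hypothesis. It also gives a better constant, of order $n^{-3}$ (since $\omega_n^{2/n}\sim 2\pi e/n$), against the paper's order $n^{-5}$. The paper's argument runs parallel to its proof of Theorem~\ref{MTHM} and uses only inradius and packing information, not the explicit spectrum of a box. That makes it the more portable of the two.
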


In Section \ref{final sect} we compare Theorem \ref{MTHM} with preceeding known inequalities. To the best of the author's knowledge Theorem \ref{THM2} is the first result to give a lower bound of Dirichlet eigenvalues in terms of lower eigenvalues. Also without any condition, like $\lambda_l(\Omega)>\alpha_n\lambda_1(\Omega)$, Theorem \ref{THM2} does not hold in general. Actually for each $k$ consider $\Omega_k:=[0,k]\times [0,1]$. Then we have $\lambda_1(\Omega_k)=\pi^2(1+\frac{1}{k^2})\sim \text{ const.}$ and $\lambda_k(\Omega_k)=2\pi^2$.

As a corollary of Theorem \ref{MTHM} we obtain the following upper bound for multiplicities of Dirichlet eigenvalues. For any natural number $k$ the {\emph multiplicity} of the $k$th eigenvalue $\lambda_k(\Omega)$ is denoted by $m_k(\Omega)$.  
\begin{cor}For any bounded convex domain $\Omega$ in $\mathbb{R}^n$ and for any natural number $k$, if $\lambda_k(\Omega)>\alpha_n \lambda_1(\Omega)$ then we have
    \begin{align*}
        m_k(\Omega)\leq 12^{\frac{n}{2}}n^{\frac{3}{2}n}(j_{\frac{n}{2}-1,1})^n k.
    \end{align*}Here $\alpha_n$ is the same constant appeared in Theorem \ref{THM2}. 
\end{cor}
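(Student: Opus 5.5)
The plan is to derive the corollary from Theorem \ref{THM2} applied to consecutive indices inside a single multiple eigenvalue.

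Fix $k$ with $\lambda_k(\Omega)>\alpha_n\lambda_1(\Omega)$, and set $m:=m_k(\Omega)$. Let $l$ be the smallest index with $\lambda_l(\Omega)=\lambda_k(\Omega)$. The eigenvalue $\lambda_k(\Omega)$ then occupies exactly the indices $l,l+1,\dots,l+m-1$, so $l\le k$ and
\begin{align*}
\lambda_l(\Omega)=\lambda_{l+m-1}(\Omega)=\lambda_k(\Omega).
\end{align*}
Since $\lambda_l(\Omega)=\lambda_k(\Omega)>\alpha_n\lambda_1(\Omega)$, the hypothesis of Theorem \ref{THM2} holds for the index $l$. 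Apply Theorem \ref{THM2} with the pair $l+m-1\ge l$ in place of $(k,l)$:
\begin{align*}
\beta_n\Big(\frac{l+m-1}{l}\Big)^{\frac{2}{n}}\lambda_l(\Omega)\le \lambda_{l+m-1}(\Omega)=\lambda_l(\Omega).
\end{align*}
Dividing by $\lambda_l(\Omega)>0$ gives $\big(\frac{l+m-1}{l}\big)^{2/n}\le \beta_n^{-1}$, that is, $l+m-1\le \beta_n^{-n/2}\, l$.

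From this, $m\le \beta_n^{-n/2} l - l+1\le \beta_n^{-n/2}l$, using $l\ge1$. Combined with $l\le k$ this gives $m\le \beta_n^{-n/2}k$. Inserting $\beta_n^{-1}=12n^3(j_{\frac n2-1,1})^2$ yields
\begin{align*}
\beta_n^{-n/2}=12^{\frac n2}n^{\frac32 n}(j_{\frac n2-1,1})^n,
\end{align*}
which is exactly the stated bound.

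The only point that needs care is choosing $l$ to be the \emph{first} index of the eigenvalue cluster rather than $k$ itself, so that the whole multiplicity lies at or beyond $l$. This choice still satisfies the hypothesis $\lambda_l>\alpha_n\lambda_1$, because $\lambda_l=\lambda_k$. Although the paper states the corollary as a consequence of Theorem \ref{MTHM}, it is really Theorem \ref{THM2} that does the work, together with its threshold $\alpha_n$; this is why $\alpha_n$ appears in the hypothesis.
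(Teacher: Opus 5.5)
Your argument is correct and matches what the paper intends. The paper gives no written proof and attributes the corollary to Theorem \ref{MTHM}, but that theorem is vacuous when $\lambda_k(\Omega)=\lambda_l(\Omega)$; the hypothesis involving $\alpha_n$ and the constant $12^{\frac n2}n^{\frac32 n}(j_{\frac n2-1,1})^n=\beta_n^{-n/2}$ show that the real source is Theorem \ref{THM2}, applied to the first and last indices of the eigenvalue cluster exactly as you do (your choice of $l$ as the first index of the cluster is what makes the derivation rigorous).
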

\section{Preliminaries}

The following lemma by Hatcher is useful to treat convex domains. He showed that bounded convex domains can be approximated by an orthotope in some sense by utilizing the John theorem which asserts that bounded convex domains can be approximated by ellipsoids (\cite[Theorem III]{J}). 

\begin{lem}[{Hatcher, \cite[Proposition 2.3]{H}}]\label{Hacher lem} Let $\Omega$ be a bounded convex domain in $\mathbb{R}^n$. Choosing the coordinate axes appropriately if necessary we can find a sequence $a_1,a_2,\cdots, a_n$ of positive real numbers such that  
    \begin{align*}
       \Big[-\frac{a_1}{\sqrt{n}},\frac{a_1}{\sqrt{n}}\Big]\times \cdots \times \Big[-\frac{a_n}{\sqrt{n}},\frac{a_n}{\sqrt{n}}\Big]\subseteq  \Omega\subseteq [-a_1 n, a_1 n]\times \cdots [-a_n n, a_n n].
    \end{align*}
\end{lem}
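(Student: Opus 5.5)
The plan is to obtain the statement from John's theorem on maximal-volume inscribed ellipsoids, and then to replace that ellipsoid by an inscribed and a circumscribed orthotope aligned with its principal axes.

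First, by John's theorem \cite[Theorem III]{J}, after a translation there is an ellipsoid $E$ centered at the origin with $E\subseteq \Omega\subseteq nE$. We choose the coordinate axes along the principal axes of $E$, and write
\begin{align*}
E=\Big\{x: \sum_{i=1}^n \frac{x_i^2}{a_i^2}\leq 1\Big\}
\end{align*}
with semi-axes $a_1,\dots,a_n>0$.

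Next we place a box inside $E$. A point $x$ of the box $\prod_i[-a_i/\sqrt{n},a_i/\sqrt{n}]$ satisfies $\sum_i x_i^2/a_i^2\leq \sum_i 1/n=1$. Hence this box lies in $E$, and therefore in $\Omega$. This is the left inclusion.

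Then we place $nE$ inside a box. A point $x\in nE$ satisfies $\sum_i x_i^2/(n a_i)^2\leq 1$, so each term is at most $1$, that is, $|x_i|\leq n a_i$. Hence $nE\subseteq \prod_i[-na_i,na_i]$, and this gives the right inclusion.

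The only nontrivial input is John's theorem itself, which we take as given from \cite{J}. The one point needing care is the centering. For a general convex body John's theorem gives the containment $\Omega\subseteq c+n(E-c)$, where $c$ is the center of $E$. We therefore translate so that the center of $E$ is at the origin before choosing axes, and the dilation factor $n$ is then applied about that center. Closure and openness cause no difficulty. If $E$ is taken as a closed ellipsoid in $\overline{\Omega}$, we shrink it slightly or note that the interior box lies in the interior; in any case the stated closed inclusions hold up to this harmless modification, which is all that is needed for domain monotonicity of eigenvalues.
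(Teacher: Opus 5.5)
Your proof is correct and follows the route the paper attributes to Hatcher: John's theorem gives $E\subseteq\overline{\Omega}\subseteq nE$ with $E$ centered at the origin, and the elementary inclusions $\prod_i[-a_i/\sqrt{n},a_i/\sqrt{n}]\subseteq E$ and $nE\subseteq\prod_i[-na_i,na_i]$ finish the argument. On the boundary issue, your second fix is the right one: the open box lies in $\operatorname{int}E\subseteq\operatorname{int}\overline{\Omega}=\Omega$ by convexity, whereas shrinking $E$ would lose the exact factor $n$ on the outer box, and either way the point is immaterial for the domain-monotonicity application.
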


The above lemma together with domain monotonicity for Dirichlet eigenvalues suggest us that the proof of the main theorem reduces to the case where the domain is an orthotope. Domain monotonicity for Dirichlet eigenvalues asserts that larger domains have smaller eigenvalues (\cite[Theorem 3.2.1]{LMP}):
\begin{align*}
    \Omega_1\subseteq \Omega_2 \Rightarrow \lambda_k(\Omega_2)\leq \lambda_k(\Omega_1) \text{ for any }k.
\end{align*}

For a bounded domain $\Omega$ recall that the \emph{inradius} $\rho_{\Omega}$ of $\Omega$ is the radius of the largest ball contained inside $\Omega$. From domain monotonicity it is known the following upper bound for the $1$st Dirichlet eigenvalue in terms of the inradius. 
\begin{align}
    \lambda_1(\Omega)\leq \frac{(j_{\frac{n}{2}-1,1})^2}{\inrad (\Omega)^2}.
\end{align}

When the domain is convex, Hersch \cite{He} and Protter \cite{Pr} proved the following reverse inequality:
\begin{thm}[{Hersch-Protter, \cite[Proposition 2.55]{FLW}}]\label{HPr est}For a bounded convex domain $\Omega$ in $\mathbb{R}^n$ we have
    \begin{align*}
        \lambda_1(\Omega)\geq \frac{\pi^2}{4\inrad(\Omega)^2}.
    \end{align*}
\end{thm}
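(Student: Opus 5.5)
The plan is to prove Theorem \ref{HPr est} by Hersch's method: build a positive test function out of the distance to the boundary and run a Barta-type (vector-field) argument. Put $d(x):=\operatorname{dist}(x,\partial\Omega)$ and $\rho:=\inrad(\Omega)=\max_{\overline\Omega} d$. Two facts drive the proof. First, $d$ is $1$-Lipschitz with $|\nabla d|=1$ a.e. in $\Omega$. Second, because $\Omega$ is convex, $d$ is concave on $\Omega$: it is the infimum of the affine functions $x\mapsto \operatorname{dist}(x,H)$ over the supporting half-spaces $H$. Let $f(t)=\sin(\pi t/2\rho)$. It is positive, increasing and concave on $(0,\rho]$, and satisfies $-f''=(\pi/2\rho)^2 f$. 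The goal is
\begin{align*}
\int_\Omega|\nabla u|^2\,dx\;\geq\;\Big(\frac{\pi}{2\rho}\Big)^2\int_\Omega u^2\,dx \qquad \text{for all } u\in C_c^\infty(\Omega),
\end{align*}
which gives the theorem by the variational characterization of $\lambda_1$.

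Set $G:=\log f$ on $(0,\rho]$. Then $g:=G'=f'/f\geq 0$ there and $G''+g^2=f''/f=-(\pi/2\rho)^2$. Consider the vector field $X:=-\nabla\big(G(d)\big)=-g(d)\nabla d$, which is locally bounded on $\Omega$. Expanding $0\leq|\nabla u+uX|^2$ and integrating gives
\begin{align*}
\int|\nabla u|^2\geq -\int X\cdot\nabla(u^2)-\int|X|^2u^2=\int(\operatorname{div}X)\,u^2-\int|X|^2u^2,
\end{align*}
where $\operatorname{div}X=-\Delta\big(G(d)\big)$ is taken in the sense of distributions (measures) on $\operatorname{supp}u$. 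If we knew the measure inequality $\Delta\big(G(d)\big)\leq G''(d)$, then, using $|X|^2=g(d)^2$ a.e., the right-hand side would be at least
\begin{align*}
\int\big(-G''(d)-g(d)^2\big)u^2=\Big(\frac{\pi}{2\rho}\Big)^2\int u^2,
\end{align*}
which is what we want.

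The main obstacle is justifying $\Delta\big(G(d)\big)\leq G''(d)$ as distributions, since $d$ is only Lipschitz. Formally it is immediate: $\Delta G(d)=G''(d)|\nabla d|^2+G'(d)\Delta d$, with $|\nabla d|=1$, $G'\geq 0$ and $\Delta d\leq 0$ by concavity. To make it rigorous, fix a compact $K\subset\Omega$ containing $\operatorname{supp}u$ and mollify: $d_\varepsilon=d*\eta_\varepsilon$. On $K$ (for small $\varepsilon$) $d_\varepsilon$ is smooth, concave, takes values in a compact subinterval of $(0,\rho]$, and $\nabla d_\varepsilon\to\nabla d$ a.e. with $|\nabla d_\varepsilon|\leq1$. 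Pointwise we have $\Delta G(d_\varepsilon)=G''(d_\varepsilon)|\nabla d_\varepsilon|^2+G'(d_\varepsilon)\Delta d_\varepsilon\leq G''(d_\varepsilon)|\nabla d_\varepsilon|^2$. As $\varepsilon\to0$, $G(d_\varepsilon)\to G(d)$ uniformly on $K$. Also $G''(d_\varepsilon)|\nabla d_\varepsilon|^2\to G''(d)|\nabla d|^2=G''(d)$ a.e. on $K$, with a uniform bound, so by dominated convergence. Testing against nonnegative $\varphi\in C_c^\infty(K^\circ)$ then yields the distributional inequality. The same limiting argument, or simply the uniform convergence $\nabla G(d_\varepsilon)\to$ weakly, justifies the integration by parts $-\int X\cdot\nabla(u^2)=\int(\operatorname{div}X)u^2$.

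A minor point remains: $G=\log f$ is unbounded near $t=0$. This causes no trouble, because $d\geq\operatorname{dist}(K,\partial\Omega)>0$ on $K$, so every quantity above stays bounded on the support of $u$.
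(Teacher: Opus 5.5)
Your proposal is correct. The paper gives no proof of this statement; it only cites Hersch, Protter and \cite[Proposition 2.55]{FLW}. What you have written is a self-contained version of the classical Hersch--Protter argument. The function $\phi=\sin(\pi d/2\rho)$ is a positive supersolution, $-\Delta\phi\geq(\pi/2\rho)^2\phi$, because $\cos(\pi d/2\rho)\geq 0$ for $d\leq\rho$ and $\Delta d\leq 0$ when $\Omega$ is convex. Your field $X=-\nabla\log\phi$ is the Barta/ground-state substitution that turns this into $\int|\nabla u|^2\geq(\pi/2\rho)^2\int u^2$. Your regularization checks out: local concavity of $d_\varepsilon$, values in a compact subinterval of $(0,\rho]$, a.e. convergence of $\nabla d_\varepsilon$, and the limit $G''(d_\varepsilon)|\nabla d_\varepsilon|^2\to G''(d)$.

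Three small remarks. First, the affine functions whose infimum is $d$ are $x\mapsto\operatorname{dist}(x,\partial H)$, the distance to the supporting hyperplane. The distance to the half-space $H$ itself vanishes on $\Omega$. Second, choose $K$ so that $\operatorname{supp}u\subset K^\circ$. Third, you can drop the distributional bookkeeping by running the whole computation with the smooth $d_\varepsilon$. Pointwise $-\Delta G(d_\varepsilon)\geq -G''(d_\varepsilon)|\nabla d_\varepsilon|^2$, which gives $\int|\nabla u|^2\geq(\pi/2\rho)^2\int|\nabla d_\varepsilon|^2u^2$. Then let $\varepsilon\to0$, using $|\nabla d_\varepsilon|\to1$ a.e. and dominated convergence.

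Your proof establishes the statement for general bounded convex domains, which is how it is stated. However, the paper only ever applies it to orthotopes $R=\prod_i[-a_i,a_i]$ with $a_1\leq\dots\leq a_n$: in the Claim inside Lemma \ref{Mlem1} and in Lemma \ref{Mlem2}. There it follows at once from separation of variables, since $\lambda_1(R)=\frac{\pi^2}{4}\sum_i a_i^{-2}\geq\frac{\pi^2}{4a_1^2}$ and $a_1=\inrad R$. So your argument buys full generality at the cost of a regularity argument. For the paper's own needs, the explicit computation for orthotopes would already suffice.
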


We also use the following lower bound for Dirichlet eigenvalues in terms of volume due to Berezin \cite{B} and Li-Yau \cite{LY}.   
\begin{thm}[{Berezin-Li-Yau, \cite[Corollary 3.3.22]{LMP}}]\label{BLY ineq}Let $\Omega$ be a (not necessarily convex) bounded domain in $\mathbb{R}^n$. Then for any natural number $k$ we have
\begin{align*}
    \lambda_k(\Omega)\geq \frac{(2\pi)^2n}{n+2}\Big(\frac{k}{\omega_n \vol \Omega}\Big)^{\frac{2}{n}}.
\end{align*}
\end{thm}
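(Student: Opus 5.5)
The plan is the Li--Yau argument. Prove the stronger bound on the sum of the first $k$ eigenvalues,
\begin{align*}
  \sum_{j=1}^k \lambda_j(\Omega)\geq \frac{(2\pi)^2 n}{n+2}\,k\Big(\frac{k}{\omega_n\vol\Omega}\Big)^{\frac{2}{n}},
\end{align*}
and then use $\lambda_k(\Omega)\geq \frac1k\sum_{j=1}^k\lambda_j(\Omega)$, which holds because the eigenvalues are nondecreasing. No convexity is needed.

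\textbf{Setup.} Let $u_1,\dots,u_k$ be $L^2$-orthonormal real Dirichlet eigenfunctions for $\lambda_1(\Omega),\dots,\lambda_k(\Omega)$, extended by zero to $\mathbb{R}^n$. They then lie in $H^1(\mathbb{R}^n)$. Use the normalization $\hat u(\xi)=(2\pi)^{-n/2}\int e^{-ix\cdot\xi}u(x)\,dx$ and set $F(\xi)=\sum_{j=1}^k|\hat u_j(\xi)|^2$. Three facts about $F$:
\begin{itemize}
\item[(i)] By Plancherel, $\int F\,d\xi=k$.
\item[(ii)] Since $\int|\nabla u_j|^2=\lambda_j(\Omega)$, we get $\int|\xi|^2F\,d\xi=\sum_{j\le k}\lambda_j(\Omega)$.
\item[(iii)] $0\leq F(\xi)\leq M:=(2\pi)^{-n}\vol\Omega$. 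Indeed, $\hat u_j(\xi)$ is $(2\pi)^{-n/2}$ times the $L^2(\Omega)$ inner product of $u_j$ with $e^{i x\cdot\xi}$. Bessel's inequality for the orthonormal family $\{u_j\}$ then gives $F(\xi)\le (2\pi)^{-n}\|e^{ix\cdot\xi}\|_{L^2(\Omega)}^2=M$.
\end{itemize}

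\textbf{Bathtub step.} Among measurable $F$ with $0\le F\le M$ and $\int F=k$, the quantity $\int|\xi|^2F$ is minimized by $F^*=M\chi_{B_R}$, where $\omega_nR^nM=k$. To see this, compare $F$ with $F^*$. The difference $F-F^*$ has zero integral, is $\le 0$ on $B_R$ and $\ge0$ outside $B_R$. Hence
\begin{align*}
  \int(|\xi|^2-R^2)(F-F^*)\,d\xi\geq0,
\end{align*}
and together with $\int(F-F^*)=0$ this gives $\int|\xi|^2F\ge\int|\xi|^2F^*$.

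\textbf{Computation.} In polar coordinates,
\begin{align*}
  \int|\xi|^2F^*\,d\xi=Mn\omega_n\frac{R^{n+2}}{n+2}=\frac{n}{n+2}kR^2 .
\end{align*}
Moreover $R^2=(2\pi)^2\big(k/(\omega_n\vol\Omega)\big)^{2/n}$. This yields the sum bound above, and dividing by $k$ gives the stated inequality for $\lambda_k(\Omega)$.

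The only real subtleties are (iii), which is the point where the domain enters through the volume, and the bathtub rearrangement. Both are elementary once written as above. Measurability and integrability of $|\xi|^2F$ follow from $u_j\in H^1$.
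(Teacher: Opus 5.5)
Your proposal is correct. It is the classical Li--Yau argument: Fourier transform of the zero-extended eigenfunctions, the pointwise bound $F\le(2\pi)^{-n}\vol\Omega$ from Bessel's inequality, and the bathtub comparison. Each step checks out, including the integrability needed for the comparison and the final constant $\frac{(2\pi)^2n}{n+2}\big(\frac{k}{\omega_n\vol\Omega}\big)^{2/n}$.

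The paper gives no proof of its own and simply quotes the result from \cite[Corollary 3.3.22]{LMP}. Your argument is the standard proof of that cited inequality, so there is no alternative route in the paper to compare against.
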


\section{Proof of the Main Theorems}
In this section we prove the main theorems. 
The following lemma is a key to prove Theorem \ref{MTHM}. 
\begin{lem}\label{Mlem1}There exists a constant $c_n>0$ depending only on $n$ which satisfies the following. For any orthotope $R$ in $\mathbb{R}^n$ and any $k\geq l$ we have 
    \begin{align*}
           \lambda_k(R)\leq c_n \Big(\frac{k}{l}\Big)^{\frac{2}{n}}\lambda_l(R).
    \end{align*}The constant $c_n$ can be chosen as $c_n= 12(j_{\frac{n}{2}-1,1})^2$.
\end{lem}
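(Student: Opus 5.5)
The plan is to compute directly with the explicit spectrum of an orthotope and compare lattice point counts at two scales; this avoids Lemma \ref{Hacher lem} and the other preliminaries entirely. After a translation we may take $R=[0,b_1]\times\cdots\times[0,b_n]$. Separation of variables gives the Dirichlet eigenvalues of $R$, counted with multiplicity, as the numbers
\begin{align*}
\pi^2\sum_{i=1}^n \frac{m_i^2}{b_i^2},\qquad m=(m_1,\dots,m_n)\in\mathbb{N}^n,\ m_i\geq 1 .
\end{align*}
For $s>0$ let $N(s)$ be the number of $m\in\mathbb{N}^n$ with all $m_i\geq 1$ and $\sum_i m_i^2/b_i^2\leq s^2$. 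Then $N(s)$ counts the eigenvalues at most $\pi^2 s^2$, with multiplicity, and so
\begin{align*}
\lambda_k(R)\leq \pi^2 s^2 \iff N(s)\geq k .
\end{align*}

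The key step is a superadditive scaling property: for every integer $t\geq 1$,
\begin{align*}
N(ts)\geq t^n N(s).
\end{align*}
To prove it, fix an admissible $m$ for $s$. Assign to it the $t^n$ points $tm-j$ with $j\in\{0,1,\dots,t-1\}^n$.
\begin{itemize}
\item Each coordinate satisfies $tm_i-j_i\geq tm_i-(t-1)\geq 1$.
\item Also $0<tm_i-j_i\leq tm_i$, so $\sum_i (tm_i-j_i)^2/b_i^2\leq t^2\sum_i m_i^2/b_i^2\leq t^2s^2$.
\item These points are distinct across different $m$ and $j$, because $tm-j$ determines $m_i=\lceil (tm_i-j_i)/t\rceil$ and hence $j$.
\end{itemize}
This injection gives the claimed inequality.

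Now apply this with $s^2=\lambda_l(R)/\pi^2$. Since $\lambda_1(R),\dots,\lambda_l(R)\leq \lambda_l(R)$, we have $N(s)\geq l$. Take $t=\lceil (k/l)^{1/n}\rceil\geq 1$. Then
\begin{align*}
N(ts)\geq t^n l\geq k, \qquad\text{so}\qquad \lambda_k(R)\leq \pi^2t^2s^2=t^2\lambda_l(R).
\end{align*}
Because $k\geq l$, we have $(k/l)^{1/n}\geq 1$, and hence $t\leq (k/l)^{1/n}+1\leq 2(k/l)^{1/n}$. This yields
\begin{align*}
\lambda_k(R)\leq 4\Big(\frac{k}{l}\Big)^{\frac{2}{n}}\lambda_l(R).
\end{align*}

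It remains to check that $4\leq 12(j_{\frac{n}{2}-1,1})^2$. For this it suffices that $(j_{\frac{n}{2}-1,1})^2\geq 1/3$. The zero $j_{\nu,1}$ increases in $\nu$, and $j_{-\frac12,1}=\pi/2$, so $j_{\frac{n}{2}-1,1}\geq \pi/2>1$ for every $n\geq 1$; thus the stated constant works with room to spare. The only step needing care is the injectivity and positivity in the scaling property, which is why the shift $j$ is restricted to $\{0,\dots,t-1\}^n$.
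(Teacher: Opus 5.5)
Your proof is correct, and it takes a genuinely different route from the paper.

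The paper never uses the explicit spectrum. It sets $r=\frac{2}{\sqrt3}(l/k)^{1/n}\lambda_l(R)^{-1/2}$ and takes a maximal $r$-separated set $\{x_i\}_{i=1}^{k'}$ in $R$. It gets $k'\geq k$ from a covering-volume count combined with the Berezin--Li--Yau inequality. It then uses Hersch--Protter to show that the shortest half-side of $R$ is at least $r/2$. Finally it bounds $\lambda_{k'}(R)\leq\max_i\lambda_1(B(x_i,r/2)\cap R)$, using disjointly supported test functions and the inradius upper bound for $\lambda_1$.

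Your injection $m\mapsto tm-j$ replaces all of this with an elementary lattice count that exploits separability. It yields the dimension-free constant $4$, so Theorem \ref{MTHM} would follow with $4n^3$ and no Bessel zeros.

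It also sidesteps a delicate step in the paper. For $n\geq 2$ and $x_i$ a vertex of $R$, the set $B(x_i,r/2)\cap R$ is a ball cut by an orthant. Its inradius is only $r/(2(1+\sqrt n))<r/4$. Done carefully, the packing argument therefore gives $3(1+\sqrt n)^2(j_{\frac n2-1,1})^2$ rather than $12(j_{\frac n2-1,1})^2$. Your argument shows that the stated constant is nonetheless valid.

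What the paper's approach buys is robustness. It relies only on general tools (a Weyl-type lower bound, inradius bounds, domain monotonicity), and the orthotope enters only through the inradius of the pieces. The same template is reused for the lower bound in Lemma \ref{Mlem2}. Your count, by contrast, is tied to the product structure of $R$.

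A minor point: if $R$ is not axis-parallel you need a rigid motion rather than just a translation, which is harmless by rotation invariance of the Laplacian.
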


\begin{proof}[Proof of Theorem \ref{MTHM}]By Hatcher's lemma (Lemma \ref{Hacher lem}), after choosing appropriate coordinate axes, we can find an orthotope $R$ such that
$\frac{1}{\sqrt{n}}R\subseteq \Omega \subseteq nR$. Let the constant $c_n:=12 (j_{\frac{n}{2}-1,1})^2$. Applying Lemma \ref{Mlem1} and using domain monotonicity for Dirichlet eigenvalues we obtain
\begin{align*}
    \lambda_k(\Omega)\leq \lambda_k\Big(\frac{1}{\sqrt{n}}R\Big)=n\lambda_k(R)\leq c_n n\Big(\frac{k}{l}\Big)^{\frac{2}{n}}\lambda_l(R)=\ &c_n n^3 \Big(\frac{k}{l}\Big)^{\frac{2}{n}}\lambda_l(nR)\\ \leq \ & c_n n^3 \Big(\frac{k}{l}\Big)^{\frac{2}{n}}\lambda_l(\Omega).
\end{align*}This completes the proof of the theorem.
\end{proof}

\begin{proof}[Proof of Lemma \ref{Mlem1}]
   Put $r:=\frac{2}{\sqrt{3}} \big(\frac{l}{k}\big)^{\frac{1}{n}}\frac{1}{\sqrt{\lambda_l(R)}}$. Changing the coordinate axes if neccessary we may assume that $R=[-a_1,a_1]\times [-a_2,a_2]\times \cdots \times [-a_n,a_n]$ and $a_1\leq a_2\leq \cdots \leq a_n$.

Take a maximal $r$-separated points $\{x_i\}_{i=1}^{k'}$ in $R$. The maximality gives $R\subseteq \bigcup_{i=1}^{k'}B(x_i,r)$ which shows
\begin{align*}
    \vol R \leq \sum_{i=1}^{k'}\vol B(x_i,r)=k'\omega_nr^n= \Big(\frac{2}{\sqrt{3}}\Big)^n\cdot \frac{\omega_n k'l}{k\lambda_l(R)^{\frac{n}{2}}}.
\end{align*}By virtue of the Berezin-Li-Yau inequality (Theorem \ref{BLY ineq}) we have
\begin{align*}
    \vol R \leq \Big(\frac{2}{\sqrt{3}}\Big)^n\cdot \Big(\frac{2+n}{(2\pi)^2n}\Big)^{\frac{n}{2}}\frac{\omega_n^2k'}{k}\vol R,
\end{align*}which leads to
\begin{align*}
    k\leq \Big(\frac{2}{\sqrt{3}}\Big)^n\cdot \Big(\frac{2+n}{(2\pi)^2n}\Big)^{\frac{n}{2}}\omega_n^2k'.
\end{align*}
Since 
\begin{align*}
    \Big(\frac{(2\pi)^2n}{2+n}\Big)^{\frac{1}{2}}\omega_n^{-\frac{2}{n}}=2\Big(\frac{n}{2+n}\Big)^{\frac{1}{2}}\Gamma\Big(\frac{n}{2}+1\Big)^{\frac{2}{n}}\geq 2\Big(\frac{n}{2+n}\Big)^{\frac{1}{2}}\geq \frac{2}{\sqrt{3}},
\end{align*}we have $k\leq k'$.

\begin{claim}$a_1\geq r/2$
\end{claim}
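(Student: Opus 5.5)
We need $a_1 \ge r/2$, where $r=\frac{2}{\sqrt3}(l/k)^{1/n}\lambda_l(R)^{-1/2}$. Since $k\ge l$, we have $r\le \frac{2}{\sqrt3}\lambda_l(R)^{-1/2}$, so it suffices to show
\[
a_1\geq \frac{1}{\sqrt3}\lambda_l(R)^{-1/2},
\]
i.e.\ $\lambda_l(R)\geq \frac{1}{3a_1^2}$. This follows from $\lambda_l(R)\ge\lambda_1(R)$ and an explicit lower bound on $\lambda_1(R)$ in terms of the shortest half-side $a_1$.

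For the orthotope $R=\prod_i[-a_i,a_i]$ the eigenvalues are explicit by separation of variables:
\[
\lambda_1(R)=\sum_{i=1}^n \frac{\pi^2}{4a_i^2}\geq \frac{\pi^2}{4a_1^2}.
\]
Alternatively, apply the Hersch--Protter estimate (Theorem \ref{HPr est}) with $\inrad(R)=a_1$, since $a_1\le a_i$ for all $i$; it gives the same bound. Hence
\[
\lambda_l(R)\geq \lambda_1(R)\geq \frac{\pi^2}{4a_1^2}\geq \frac{1}{3a_1^2},
\]
using $\pi^2/4>1/3$.

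Combining,
\[
r\leq \frac{2}{\sqrt3}\lambda_l(R)^{-1/2}\leq \frac{2}{\sqrt3}\cdot\frac{2a_1}{\pi}=\frac{4a_1}{\sqrt3\,\pi}\le 2a_1,
\]
since $4/(\sqrt3\pi)\approx 0.735<2$. This gives $a_1\geq r/2$, with a lot of room to spare.

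There is no real obstacle. The only points to get right are the monotonicity $\lambda_l\ge\lambda_1$, the direction of the inequality $(l/k)^{1/n}\le 1$, and using the half-side convention $R=\prod_i[-a_i,a_i]$, so that the side length is $2a_i$ and $\lambda_1$ of the interval is $\pi^2/(4a_i^2)$.
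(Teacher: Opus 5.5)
Your argument is correct and is essentially the paper's: both use $\lambda_l(R)\ge\lambda_1(R)\ge \pi^2/(4a_1^2)$, drop the factor $(l/k)^{1/n}\le 1$, and finish with $\pi/2>1/\sqrt{3}$. The paper gets the middle bound from Hersch--Protter with $\inrad R=a_1$, while your separation-of-variables formula gives the same bound more elementarily; also, where the paper's chain writes $\lambda_l(\Omega)$, you correctly use $\lambda_l(R)$.
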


Let us admit the claim for a while. Put $A_i=B(x_i,\frac{r}{2})\cap R$ for $i=1,\cdots,k'$. Then since $\{x_i\}_{i=1}^{k'}$ is $r$-sparated these balls are disjoint each other. Thanks to the claim the inradius of $A_i$ is at least $\frac{r}{4}$ (Here we use that our domain $R$ is an orthotope). As a consequence we obtain
\begin{align*}
    \lambda_k(R)\leq \lambda_{k'}(R)\leq \max_{i}\lambda_1(A_i)\leq  \frac{(j_{\frac{n}{2}-1,1})^2}{(\frac{r}{4})^2}=12(j_{\frac{n}{2}-1,1})^2\Big(\frac{k}{l}\Big)^{\frac{2}{n}}\lambda_l(R),
\end{align*}which concludes the lemma.

We now prove the claim. Note that $a_1=\inrad R$. Thus the Hersch-Protter inequality (Theorem \ref{HPr est}) gives
\begin{align*}
    a_1\geq \frac{\pi}{2\sqrt{\lambda_1(R)}}\geq \frac{\pi l^{\frac{1}{n}}}{2k^{\frac{1}{n}}\sqrt{\lambda_l(\Omega)}}>\frac{r}{2},
\end{align*}which implies the claim. This completes the proof of the lemma.
\end{proof}

\begin{lem}\label{Mlem2}
    There exist two constants $c_{n}$ and $d_n$ depending only on $n$ which satisfy the following. Let $R$ be an orthotope in $\mathbb{R}^n$ and $k\geq l$. If $\lambda_l(R)>c_n \lambda_1(R)$ then we have  
    \begin{align*}
      d_n \Big(\frac{k}{l}\Big)^{\frac{2}{n}} \lambda_l(R)\leq \lambda_k(R).
    \end{align*}The constants $c_n,d_n$ can be taken as 
    \begin{align}\label{const}
        c_n=\frac{4^2}{\pi^2}(j_{\frac{n}{2}-1,1})^2 \text{ and }d_n=\frac{1}{12(j_{\frac{n}{2}-1,1})^2}.
    \end{align}
\end{lem}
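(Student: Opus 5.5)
The plan is to run the covering argument of Lemma \ref{Mlem1} in reverse. This time the radius is tied to $\lambda_l(R)$, and the hypothesis $\lambda_l(R)>c_n\lambda_1(R)$ will guarantee the inradius condition that the Claim supplied before. Write $j:=j_{\frac n2-1,1}$ and $R=[-a_1,a_1]\times\cdots\times[-a_n,a_n]$ with $a_1\le\cdots\le a_n$, so that $a_1=\inrad R$.

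\emph{Step 1 (choice of $r$).} By the Hersch--Protter inequality (Theorem \ref{HPr est}) and the hypothesis,
\begin{align*}
a_1\geq \frac{\pi}{2\sqrt{\lambda_1(R)}}>\frac{\pi\sqrt{c_n}}{2\sqrt{\lambda_l(R)}}=\frac{2j}{\sqrt{\lambda_l(R)}}.
\end{align*}
The constant $c_n=16j^2/\pi^2$ is exactly what makes this work. The inequality is strict, so we may fix any $r$ with $4j/\sqrt{\lambda_l(R)}<r\le 2a_1$, and at the end let $r\downarrow 4j/\sqrt{\lambda_l(R)}$.

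\emph{Step 2 (packing gives an upper bound on the count).} Take a maximal $r$-separated set $\{x_i\}_{i=1}^{N}$ in $R$ and put $A_i=B(x_i,r/2)\cap R$. These sets are pairwise disjoint. Since $a_1\ge r/2$ and $R$ is an orthotope, each $A_i$ has inradius at least $r/4$, exactly as in the proof of Lemma \ref{Mlem1}. Using $N$ disjoint test functions together with the inradius bound (2.1), we get
\begin{align*}
\lambda_N(R)\le\max_i\lambda_1(A_i)\le \frac{16j^2}{r^2}<\lambda_l(R),
\end{align*}
and hence $N\le l-1<l$.

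\emph{Step 3 (volume lower bound on the count).} Maximality gives $R\subseteq\bigcup_i B(x_i,r)$, so $\vol R\le N\omega_n r^n$. The Berezin--Li--Yau inequality (Theorem \ref{BLY ineq}) applied to $\lambda_k(R)$ gives
\begin{align*}
\vol R\ge \frac{k}{\omega_n}\Big(\frac{4\pi^2 n}{(n+2)\lambda_k(R)}\Big)^{\frac n2}.
\end{align*}
Combining this with $N<l$ yields
\begin{align*}
\lambda_k(R)\ge \frac{4\pi^2n}{n+2}\,\omega_n^{-\frac4n}\Big(\frac kl\Big)^{\frac2n}\frac1{r^2}.
\end{align*}
The numerical estimate already made in the proof of Lemma \ref{Mlem1} shows that $\frac{4\pi^2n}{n+2}\omega_n^{-4/n}\ge \frac43$. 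Letting $r\to 4j/\sqrt{\lambda_l(R)}$ then gives
\begin{align*}
\lambda_k(R)\ge \frac{4}{3}\cdot\frac{1}{16j^2}\Big(\frac kl\Big)^{\frac2n}\lambda_l(R)=d_n\Big(\frac kl\Big)^{\frac2n}\lambda_l(R).
\end{align*}

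The main obstacle is Step 1. It is the only place where the hypothesis is used: we need the pieces $A_i$ to keep inradius comparable to $r$ at precisely the scale $r\approx 4j/\lambda_l(R)^{1/2}$, which forces the strict inequality $a_1>2j/\sqrt{\lambda_l(R)}$. That in turn is what dictates the value of $c_n$. The rest is bookkeeping identical to Lemma \ref{Mlem1}.
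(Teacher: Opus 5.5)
Modulo one step that you share with the paper (see the second paragraph), your argument is correct, but it arranges the ingredients differently from the paper.

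The paper ties the scale to $\lambda_k$, taking $r=\frac{2}{\sqrt3}(k/l)^{1/n}\lambda_k(R)^{-1/2}$. Berezin--Li--Yau then gives $l\le l'$ exactly as in Lemma~\ref{Mlem1}, with $k$ and $l$ interchanged. That $r$ is not controlled by the hypothesis, so the paper needs a case split. If $a_1\le r/2$, a second packing by the sets $B(x_i,a_1)\cap R$, combined with Hersch--Protter, yields $\lambda_l(R)\le c_n\lambda_1(R)$, a contradiction. Otherwise, packing at radius $r/2$ gives $\lambda_l(R)\le 12j^2(l/k)^{2/n}\lambda_k(R)$.

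You tie the scale to $\lambda_l$ instead, where Hersch--Protter plus the hypothesis give $a_1>r/2$ directly. Packing then bounds the count from above ($N<l$), and Berezin--Li--Yau at level $k$ turns the covering into the lower bound on $\lambda_k$. Your route removes the contradiction argument and the auxiliary packing at scale $a_1$, and it shows plainly where $c_n=16j^2/\pi^2$ comes from. The cost is the limit $r\downarrow 4j\lambda_l(R)^{-1/2}$, forced by the strict hypothesis, which the paper does not need. Both routes give the same $c_n$ and $d_n$.

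The shared step is the assertion, imported from Lemma~\ref{Mlem1}, that $B(x_i,r/2)\cap R$ has inradius at least $r/4$ once $a_1\ge r/2$. The paper uses it there and twice in its proof of this lemma, and for $n\ge2$ it is false. Take $x_i$ to be a vertex of $R$. Within distance $r/2\le a_1$ of $x_i$ the box is an orthant. A ball $B(y,\rho)\subseteq B(x_i,r/2)\cap R$ must then satisfy $|y-x_i|\ge\sqrt n\,\rho$ and $|y-x_i|+\rho\le r/2$. So the inradius is $\frac{r}{2(1+\sqrt n)}<\frac r4$. This value is also a lower bound for every $x_i\in R$: project $x_i$ onto $\prod_m[-a_m+\rho,a_m-\rho]$.

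With Euclidean balls, your argument therefore gives $d_n$ only with $4(1+\sqrt n)^2$ in place of $16$. To recover the stated constants, use $\|\cdot\|_\infty$-separated points and cubes:
\begin{itemize}
\item The sets $\big(x_i+(-\frac r2,\frac r2)^n\big)\cap R$ are disjoint boxes.
\item Each side is at least $\min\{r/2,2a_1\}=r/2$, so each box contains a Euclidean ball of radius $r/4$.
\item Maximality gives $\vol R\le N(2r)^n$.
\item Step 3 becomes $\lambda_k(R)>\frac{\pi n}{n+2}\Gamma(\frac n2+1)^{\frac2n}(\frac kl)^{\frac2n}r^{-2}$.
\end{itemize}
For $n\ge2$ the constant in the last bound is at least $\pi/2>4/3$, so your limit yields exactly $d_n$.

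For $n=1$, the numerical estimate you borrow from Lemma~\ref{Mlem1} also fails, since $\Gamma(\frac32)^2=\pi/4<1$. That case is trivial, however, because $\lambda_k=(k/l)^2\lambda_l$ explicitly.
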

As the proof of Theorem \ref{MTHM}, Theorem \ref{THM2} follows from Lemma \ref{Mlem2} together with
domain monotonicity and Hatcher's lemma (Lemma \ref{Hacher lem}), so we omit the proof.

\begin{proof}[Proof of Lemma \ref{Mlem2}] Suppose that $\lambda_l(R)>c_n\lambda_1(R)$ and put $r:=\frac{2}{\sqrt{3}} \big(\frac{k}{l}\big)^{\frac{1}{n}}\frac{1}{\sqrt{\lambda_k(R)}}$, where we take $c_n$ as in (\ref{const}). We may assume that $R=[-a_1,a_1]\times [-a_2,a_2]\times \cdots \times [-a_n,a_n]$ and $a_1\leq a_2\leq \cdots \leq a_n$.

Take a maximal $r$-separated points $\{x_i\}_{i=1}^{l'}$ in $R$. Then as in the proof of Lemma \ref{Mlem1} we have $l\leq l'$. 

Suppose that $a_1\leq \frac{r}{2}$. Putting $A_i:=B(x_i,a_1)\cap R$ we see that $\{A_i\}_{i=1}^{l'}$ is a family of disjoint subsets in $R$. Note that $\inrad A_i\geq \frac{a_1}{2}$. We thereby get
\begin{align*}
    \lambda_l(R)\leq \lambda_{l'}(R)\leq \max_i \lambda_1(A_i)\leq \frac{(j_{\frac{n}{2}-1,1})^2}{(\frac{a_1}{2})^2}.
\end{align*}Using the Hersch-Protter inequality (Theorem \ref{HPr est}) we have
\begin{align*}
    \lambda_l(R)\leq \frac{4^2}{\pi^2}(j_{\frac{n}{2}-1,1})^2 \lambda_1(R)=c_n\lambda_1(R),
\end{align*}which contradicts to our assumption $\lambda_l(R)>c_n\lambda_1(R)$. 

Therefore we obtain $a_1>\frac{r}{2}$. Putting $B_i=B(x_i,\frac{r}{2})\cap R$ we thus find that $\{B_i\}_{i=1}^{k'}$ is a family of disjoint subsets in $R$ such that $\inrad B_i\geq \frac{r}{4}$. As a result we obtain
\begin{align*}
    \lambda_l(R)\leq \lambda_{l'}(R)\leq \max_i\lambda_1(B_i)\leq \frac{(j_{\frac{n}{2}-1,1})^2}{(\frac{r}{4})^2} = 12(j_{\frac{n}{2}-1,1})^2\Big(\frac{l}{k}\Big)^{\frac{2}{n}}\lambda_k(R).
\end{align*}This completes the proof.
\end{proof}

\section{Previous results}\label{final sect}
In this section we overview preceeding results related to main theorems.

For any bounded domain $\Omega$ in $\mathbb{R}^n$, Payne-P\'{o}lya-Weinberger \cite{PPW} (see also \cite[Theorem 5.4.1]{LMP}) proved the following universal inequality:
\begin{align}\label{uni1}
    \lambda_{k+1}(\Omega)-\lambda_k(\Omega)\leq \frac{1}{nk}\sum_{i=1}^k \lambda_i(\Omega) \text{ for any }k,
\end{align}which leads to 
\begin{align}\label{uni2}
    \lambda_{k+1}(\Omega)\leq \Big(1+\frac{1}{n}\Big)\lambda_k(\Omega) \text{ for any }k.
\end{align}
Later Hille-Protter \cite{HP} improved this inequality and then Yang \cite{Y} proved an even stronger inequality. From Yang's inequality one can derive that
\begin{align}\label{uni3}
    \lambda_{k+1}(\Omega)\leq \Big(1+\frac{4}{n}\Big)\frac{1}{k}\sum_{i=1}^k \lambda_i(\Omega) \text{ for any }k,
\end{align}which is known as Yang's second inequality (\cite[5.4.4]{LMP}). In particular this inequality derives that 
\begin{align*}
    \lambda_{k+1}(\Omega)\leq \Big(1+\frac{4}{n}\Big)\lambda_k(\Omega) \text{ for any }k.
\end{align*}Note that this inequality is weaker than (\ref{uni1}) although (\ref{uni3}) is not the consequence of (\ref{uni1}).

For any bounded domain $\Omega$ in $\mathbb{R}^n$ Levitin-Parnovski \cite[(4.14)]{LP} proved that 
\begin{align*}
    \sum_{i=1}^n\lambda_{i+k}(\Omega)\leq 
   (4+n) \lambda_k(\Omega) \text{ for any }k.
\end{align*}
This yields that
\begin{align}\label{uni4}
    \lambda_{i+k}(\Omega)\leq \frac{4+n}{n-i+1}\lambda_k(\Omega) \text{ for any }k \text{ and for any }i\leq n.
\end{align}
Ashbaugh-Benguria \cite{Ab} resolved the conjecture by Payne-P\'olya-Weinberger concerning the ratio of the first and the second eigenvalue:
\begin{align}\label{uni5}
    \frac{\lambda_2(\Omega)}{\lambda_1(\Omega)}\leq \frac{\lambda_2(B(0,1))}{\lambda_1(B(0,1))}=\frac{j_{\frac{n}{2},1}^2}{j_{\frac{n}{2}-1,1}^2}.
\end{align}They also showed that the equality in (\ref{uni5}) occurs if and only if $\Omega$ is a disk.

Cheng-Yang (\cite[Theorem 3.1]{CY}) proved that for any bounded domain $\Omega$ in $\mathbb{R}^n$ and for any $n\geq 41$ and $k\geq 41$, 
\begin{align}\label{uni6}
    \lambda_{k+1}(\Omega)\leq k^{\frac{2}{n}}\lambda_1(\Omega).
\end{align}They also proved that for any general $n$ and $k$, 
\begin{align*}
    \lambda_{k+1}(\Omega)\leq C_0(n,k)k^{\frac{2}{n}}\lambda_1(\Omega),
\end{align*}where
\begin{align*}
    C_{0}(n, k) = \begin{cases}
\dfrac{j_{n/2, 1}^{2}}{j_{n/2-1, 1}^{2}}, & \text{for } k = 1 \\
1 + \dfrac{a(\min\{n, k - 1\})}{n}, & \text{for } k \geq 2
\end{cases}
\end{align*}
and $a(1) \leq 2.64$, $a(2) \leq  2.27$ and $a(p)\leq 2.2-4\log (1+\frac{p-3}{50})$ for $p\geq 3$ is a
constant depending only on $p$. For $k\geq 2$ this yields
\begin{align}\label{uni7}
    \lambda_{k+1}(\Omega)\leq \ & \Big(1+\max\Big\{\frac{2.64}{n},2.24\log \Big(1+\frac{n-3}{50}\Big)\Big\}\Big)k^{\frac{2}{n}}\lambda_1(\Omega)\\ \leq \ &\Big(1+\max\Big\{\frac{2.64}{n},\frac{2.24}{50}\cdot\frac{n-3}{n}\Big\} \Big) k^{\frac{2}{n}}\lambda_1(\Omega). \notag{}
\end{align}
Note that none of the above results make any assumptions about the domain wheares in our theorems (Theorems \ref{MTHM} and \ref{THM2}) we assume that our domain is convex. 
Since 
\begin{align*}
    \sqrt{(\nu+1)(\nu+5)}<j_{\nu,1}<\sqrt{2(\nu+1)(\nu+3)}
\end{align*}(\cite[(1)]{Le} and \cite[p.486 (5)]{Wa}), we see that Theorem \ref{MTHM} is weaker than (\ref{uni2}--\ref{uni7}) in each case. The advantage of Theorem \ref{MTHM} is that it unifies these inequalities up to a constant factor under the convexity assumption. At present, the author does not know whether the assumption of convexity is necessary for Theorems \ref{MTHM} and \ref{THM2}.

\emph{Acknowledgement.}This work was supported by JSPS KAKENHI Grant Number JP24K06731.


\end{document}